\newcommand{\mc}{\mathcal}
\newcommand{\mbb}{\mathbb}
\newcommand{\mr}{\mathrm}
\newcommand{\argmin}{\mathop{\rm argmin}\limits}
\newcommand{\vecg}{\mathbf{g}}
\newcommand{\vecu}{\mathbf{u}}
\newcommand{\vecv}{\mathbf{v}}
\newcommand{\vecw}{\mathbf{w}}
\newcommand{\vecx}{\mathbf{x}}
\newcommand{\vecz}{\mathbf{z}}
\newcommand{\vecX}{\mathbf{X}}
\newcommand{\vecbeta}{\boldsymbol \beta}
\newcommand{\vecdelta}{\boldsymbol \delta}
\newcommand{\vecvarrho}{\boldsymbol \varrho}
\newcommand{\vectau}{\boldsymbol \tau}
\newcommand{\vecmu}{\boldsymbol \mu}
\renewcommand{\algorithmicrequire}{\textbf{Input:}}
\renewcommand{\algorithmicensure}{\textbf{Output:}}
\numberwithin{equation}{section}
\newtheorem{theorem}{Theorem}[section]
\newtheorem{proposition}{Proposition}[section]
\newtheorem{remark}{Remark}[section]
\newtheorem{lemma}{Lemma}[section]
\newtheorem{definition}{Definition}[section]
\newtheorem{assumption}{Assumption}[section]
\begin{document}
\title{Adversarial Robust Weighted Huber Regression}
\author{Takeyuki Sasai
\thanks{Department of Statistical Science, The Graduate University for Advanced Studies, SOKENDAI, Tokyo, Japan. Email: sasai@ism.ac.jp}
\and Hironori Fujisawa
\thanks{The Institute of Statistical Mathematics, Tokyo, Japan.
Department of Statistical Science, The Graduate University for Advanced Studies, SOKENDAI, Tokyo, Japan.
Center for Advanced Integrated Intelligence Research, RIKEN, Tokyo, Japan. Email:fujisawa@ism.ac.jp}
}
\maketitle
\begin{abstract}
	We consider a robust estimation of  linear regression coefficients. In this note, we focus on the case where the covariates are sampled from an $L$-subGaussian distribution with unknown covariance, the noises are sampled from a distribution with a bounded absolute moment and both covariates and noises may be contaminated by an adversary.
	We derive an estimation error bound, which depends on the stable rank and the condition number of the covariance matrix of covariates with a polynomial computational complexity of estimation.
\end{abstract}

\section{Introduction}
	In the present paper,  we study linear regression problem when outputs and inputs are contaminated by adversary:
	\begin{align}
	\label{model:adv}
	y_i = (\vecx_i+\vecvarrho_i)^\top\vecbeta^*+\xi_i+\sqrt{n}\theta_i,\quad  i=1,\cdots,n,
	\end{align}
	where $\left\{\vecx_i\right\}_{i=1}^n$ is a sequence of covariate vector, $\vecbeta^* \in \mbb{R}^d$ is the true coefficient vector and  $\left\{\xi_i\right\}_{i=1}^n$ is a sequence of noise. 	We admit adversary to inject arbitrary values into arbitral $o$ samples of $(y_i,\vecx_i)_{i=1}^n$. Let $\mc{O}$ be the index  set of the injected samples and $\vecvarrho_i= (0,\cdots,0)^\top$ and $\theta_i=0$ for $i \in \mc{I}= (1,\cdots,n)\setminus \mc{O}$. 
	The difficulty is that $\{\vecvarrho_i\}_{i=1}^n$ and $\{\theta_i\}_{i=1}^n$ can be arbitral values and they are allowed to be correlated freely among them and correlated with $\{\vecx_i\}_{i=1}^n$ and $\{\xi_i\}_{i=1}^n$.
	We note that $\{\vecx_i,\xi_i\}_{i\in\mc{I}}$ is no longer i.i.d. sequence because we allow adversary freely to select samples for contamination.

	\cite{KLiKotMek2018Efficient,DiaKonSte2019Efficient,BakPra2021Robust, PenJogLoh2020robust,CheAraTriJorFlaBari2020Optimal} considered the estimation problems of $\vecbeta^*$ from \eqref{model:adv}.
	When the covariance matrix of covariates is unknown and the covariates are drawn from a heavy-tailed distribution,
	\cite{BakPra2021Robust, PenJogLoh2020robust,CheAraTriJorFlaBari2020Optimal}
	derived sharp error bounds.
	We consider the case where the covariance matrix of covariates is unknown and the covariates are drawn from an $L$-subGaussian distribution.
	Our finding is that even when the covariance is unknown, the estimation error bound can become sharper when covariates are drawn from an $L$-subGaussian distribution.
	Our method is basically the same as Algorithm 1 of \cite{PenJogLoh2020robust} and the main difference is preprocessing for the covariate.

	Define the maximum and minimum eigenvalue of the matrix $M$ as $\lambda_{\max}(M)$ and $\lambda_{\min}(M)$, respectively and define the condition number of $\Sigma^\frac{1}{2}=\kappa(\Sigma^\frac{1}{2}) = \lambda_{\max}(\Sigma^\frac{1}{2})/\lambda_{\min}(\Sigma^\frac{1}{2})$.
	Our  result is  the following. 	For the precise statements, see Section \ref{sec:results}.
	\begin{theorem}
		\label{theoreminformal0}
		Suppose that $\left\{\vecx_i \right\}_{i=1}^n$ is a sequence with i.i.d. random vectors drawn from an $L$-subGaussian distribution with mean zero and   with covariance matrix $\Sigma$, where $\Sigma$ is unknown. 
		Suppose that $\left\{\xi_i\right\}_{i=1}^n$ is a sequence with i.i.d. random variables drawn from a distribution whose absolute moment is bounded by $\sigma$. 
		 Assume that $\left\{\vecx_i \right\}_{i=1}^n$ and $\left\{\xi_i\right\}_{i=1}^n$ are independent.
		Then, with polynomial computational time, we can construct $\hat{\vecbeta}$ such that 
		\begin{align}
		\label{t:informal1}
		\|\hat{\vecbeta}-\vecbeta^*\|_2 \leq   c(L,\sigma) \times\frac{\kappa(\Sigma^\frac{1}{2})}{\lambda_{\min}(\Sigma^\frac{1}{2})}\times\left(\sqrt{\frac{r_\Sigma}{n}}+\sqrt{\frac{\log(1/\delta)}{n}}+\sqrt{\frac{o}{n}}\right),
		\end{align}
		where  $c(L,\sigma)$ is some constant depending on $L$ and $\sigma$ and $r_\Sigma = \mr{Trace}(\Sigma)/\lambda_{\max}(\Sigma)$,
		with high probability probability at least $1-5\delta$. 
	\end{theorem}
	\begin{remark}
		We assume the independence of $\left\{\vecx_i \right\}_{i=1}^n$ and $\left\{\xi_i\right\}_{i=1}^n$ in the theorem above.
		Strictly speaking, our theorem requires slightly weaker condition than the independence. See Theorem \ref{t:1:main}.
	\end{remark}
	We note that $r_\Sigma$ is sometimes called stable rank \cite{Ver2018High} and we easily see $r_\Sigma\leq d$.
	The term including $d$ in error bounds of \cite{BakPra2021Robust,CheAraTriJorFlaBari2020Optimal,PenJogLoh2020robust} is $\tilde{O}(d)$, but in this paper, we can remove the logarithmic dependence of $d$ when covariates are drawn from an $L$-subGaussian distribution.

	In Section \ref{se:om}, we provide our estimation method. 
	In Section \ref{sec:results}, we state our main results.
	In Section \ref{sec:key}, we state key propositions without proof.
	In Section \ref{sec:pmt}, we give the proofs of main results using the propositions stated in Section \ref{sec:key}.
	In Section \ref{sec:maindet}, we give the proofs of propositions in  Section \ref{sec:key}.
\section{Method}
	\label{se:om}
	Define $\varepsilon = \frac{o}{n}$ and $\vecX_i = \vecx_i+\vecvarrho_i$.
To estimate $\vecbeta^*$ in \eqref{model:adv}, 
we propose the following algorithm (Algorithm \ref{outmethod}).
\begin{algorithm}
	\caption{TWO STEP WEIGHTED HUBER REGRESSION}
	\begin{algorithmic}[1]
			\label{outmethod}
		\renewcommand{\algorithmicrequire}{\textbf{Input:}}
		\renewcommand{\algorithmicensure}{\textbf{Output:}}
		\REQUIRE $\left\{y_i,\vecX_i\right\}_{i=1}^n$, $\varepsilon$ and the tuning parameter $\lambda_o$
		\ENSURE  $\hat{\vecbeta}$
		\STATE $\left\{ \hat{w}_i\right\}_{i=1}^n \leftarrow \text{COMPUTE-WEIGHT}(\left\{\vecX_i\right\}_{i=1}^n,\varepsilon)$
		\STATE $\left\{ \hat{w}_i'\right\}_{i=1}^n \leftarrow \text{THRESHOLDING}(\left\{ \hat{w}_i\right\}_{i=1}^n )$
		\STATE $\hat{\vecbeta} \leftarrow \text{WEIGHTED-HUBER-REGRESSION}(\left\{y_i,\vecX_i,\hat{w}'_i\right\}_{i=1}^n, \lambda_o)$
	\end{algorithmic} 
\end{algorithm}

In the following Sections \ref{sec:cw}, \ref{sec:t} and \ref{sec:whr}, we describe the roles of the procedures COMPUTE-WEIGHT, THRESHOLDING and 
WEIGHTED-HUBER-REGRESSION, respectively.

\subsection{COMPUTE-WEIGHT}
\label{sec:cw}
COMPUTE-WEIGHT reduces the effects of outliers for covariates.
We require COMPUTE-WEIGHT to compute the weight vector $\hat{w}= (\hat{w}_1,\cdots, \hat{w}_n)$ such that the following quantity is sufficiently small:
\begin{align}
	\label{ine:adv-spect}
	\left|\sum_{i \in \mc{O}}\hat{w}_iu_i \vecX_i^\top(\vecbeta^*-\hat{\vecbeta}) \right|
\end{align}
for $\vecu$ such that $\|\vecu\|_\infty \leq c$ for sum numerical constant $c$. In the proof of our main proposition (Proposition \ref{t:main}), the evaluation of \eqref{ine:adv-spect} appears.
In the present paper, we use COMPUTE-WEIGHT as a variant of Algorithm 1 of \cite{DalMin2020All} and the algorithm satisfies our aim.
Algorithm 1 of \cite{DalMin2020All} computes a weight sequence $\{w_i\}_{i=1}^n$ such that $\sum_{i=1}^n w_i\vecX_i$
is close to the true mean in $\ell_2$ norm.
As we refer in the Introduction, there are many algorithms to estimate mean with existence on outliers and many of them have the properties such that \eqref{ine:adv-spect}. However, the algorithm in \cite{DalMin2020All} derived dimension free bound at the first time with polynomial time conplexity and our slightly modified algorithm inherit the properties. We note that, the algorithm in \cite{DalMin2020All} have another fascinating points such that high breakdown point and so on.

Here, we introduce Algorithm 1 of \cite{DalMin2020All}.
Define the contamination ratio $\varepsilon  = o/n$ and 
define the probability simplex $\Delta^{n-1}$ as
\begin{align}
	\Delta^{n-1} = \left\{\vecw \in [0,1]^n: \sum_{i=1}^nw_i =1, \quad \|\vecw\|_\infty\leq \frac{1}{n(1-\varepsilon)}\right\}.
\end{align}
The algorithm introduced in \cite{DalMin2020All} is the following.
For some matrix $M$, define $\lambda_{\max}(M)$ is the maximum singular value of $M$.
\begin{algorithm}[H]
	\caption{Algorithm 2 of \cite{DalMin2020All}}
	\label{alg:cw0}
	\begin{algorithmic}
	\REQUIRE{data $\vecX,\cdots, \vecX_n \in \mbb{R}^d$ and $\varepsilon$.}
	\ENSURE{parameter estimate $\hat{\vecmu}$} and weight estimate $\hat{\vecw} = \{\hat{w}_1,\cdots,\hat{w}_n\}$\\
	{\bf Initialize}: compute $\hat{\vecmu}_0$ as the minimizer of $\sum_{i=1}^n \|\vecX_i-\vecmu\|_2$\\

	Set $K=\max\left\{0,\frac{\log(4p)-2\log (\varepsilon(1-2\varepsilon))}{2\log(1-2\varepsilon)-\log \varepsilon -\log(1-\varepsilon)}\right\}$.

	{\bf For} {$k=1:K$}\\
		\ \ \ \ \ Compute current weights:\\
		\ \ \ \ \ $\hat{\vecw}_{k}=(w_{1,k},\cdots,w_{n,k})^\top \in \argmin_{\vecw \in \Delta^{n-1}} \lambda_{\max}\Big(\sum_{i=1}^n w_i (\vecX_i-\hat{\vecmu}_{k-1})(\vecX_i-\hat{\vecmu}_{k-1})^\top \Big)$.\\
		\ \ \ \ \ Update the estimator: $\hat{\vecmu}_{k} = \sum_{i=1}^n\hat{w}_{i,k}\vecX_i$.\\
	 {\bf return}	$\hat{\vecmu}_K$ and $\hat{\vecw}^K$.    
\end{algorithmic}
\end{algorithm}
We define COMPUTE-WEIGHT as following.
\begin{algorithm}[H]
	\caption{COMPUTE-WEIGHT}
	\label{alg:cw}
	\begin{algorithmic}
	\REQUIRE{data (covariates) $\vecX_1,\cdots, \vecX_n \in \mbb{R}^d$ and $\varepsilon$.}
	\ENSURE{ weight estimate $\hat{\vecw} = \{\hat{w}_1,\cdots,\hat{w}_n\}$}\\
	Compute weights:\\
	$\hat{\vecw}=(w_1,\cdots,w_{n})^\top \in \argmin_{\vecw \in \Delta^{n-1}} \lambda_{\max}\Big(\sum_{i=1}^n w_i \vecX_i\vecX_i^\top \Big)$.\\
	 {\bf return}	$\hat{\vecw}$.    
\end{algorithmic}
\end{algorithm}
\medskip 
The only difference is that, Algorithm 2 of \cite{DalMin2020All}
estimate the unknown true mean from data.
However, COMPUTE-WEIGHT does not need to estimate the mean because, in our setting, the mean of the covariates $\{\vecx_i\}_{i=1}^n$ is known and
the loop in Algorithm \ref{alg:cw0} is not needed.
\subsection{THRESHOLDING}
\label{sec:t}
THRESHOLDING is a simple discretization of $\left\{ \hat{w}_i\right\}_{i=1}^n$ and makes it easy to analyze the estimator.
\begin{algorithm}[H]
	\caption{THRESHOLDING}
	\label{alg:t}
	\begin{algorithmic}
	\REQUIRE{weight vector $\hat{\vecw} = \{\hat{w}_i\}_{i=1}^n$.}
	\ENSURE{discretized weight vector $\hat{\vecw}'= \{\hat{w}'_i\}_{i=1}^n$}\\
	{\bf For} {$i=1:n$}\\
		\ \ \ \ \ {\bf if} $\hat{w}_i \geq \frac{1}{2n}$\\
		\ \ \ \ \ \ \ \ \ \ $\hat{w}'_i = \frac{1}{n}$\\
		\ \ \ \ \ {\bf else}\\
		\ \ \ \ \ \ \ \ \ \ $\hat{w}'_i = 0$\\
	 {\bf return}	$\hat{\vecw}'$.    
\end{algorithmic}
\end{algorithm}
From the following lemma, we see that 
the number of $\hat{w}_i,\,i=1,\cdots,n$ rounded at zero is at most  $2o$. 
Let $I_{<}$ and $I_{\geq}$ be  the sets of the indices such that $w_i <  \frac{1}{2n}$  and $w_i \geq \frac{1}{2n}$, respectively. 
\begin{lemma}
	\label{l:w2}
	Suppose $n>2o$. For any $w_i \in \Delta_{n,\varepsilon}$, we have $|I_{<}| \leq 2o$.
\end{lemma}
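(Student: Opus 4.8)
The plan is to prove this by a pure mass-counting argument that uses only the two constraints defining $\Delta_{n,\varepsilon}$, namely $\sum_{i=1}^n w_i = 1$ and $0 \le w_i \le \frac{1}{(1-\varepsilon)n}$; no stochastic input about the $x_i$ is needed, so this is a deterministic statement about any feasible weight vector. First I would set $k = |I_{w_<}|$, so that $|I_{w_\geq}| = n-k$, and decompose the total mass as
\begin{align*}
1 = \sum_{i=1}^n w_i = \sum_{i \in I_{w_<}} w_i + \sum_{i \in I_{w_\geq}} w_i.
\end{align*}

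Next I would bound the two pieces from above using the defining inequalities: on $I_{w_<}$ every weight satisfies $w_i < \frac{1}{2n}$ by definition, giving $\sum_{i \in I_{w_<}} w_i < \frac{k}{2n}$, while on $I_{w_\geq}$ the box constraint of $\Delta_{n,\varepsilon}$ gives $\sum_{i \in I_{w_\geq}} w_i \le \frac{n-k}{(1-\varepsilon)n}$. Combining these with the mass identity yields the scalar inequality
\begin{align*}
1 < \frac{k}{2n} + \frac{n-k}{(1-\varepsilon)n}.
\end{align*}
Clearing denominators (multiply through by $2n(1-\varepsilon)$) and collecting the terms in $k$ reduces this to $2n\varepsilon > k(1+\varepsilon)$, i.e. $k < \frac{2n\varepsilon}{1+\varepsilon}$.

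Finally, since $\varepsilon > 0$ we have $\frac{2n\varepsilon}{1+\varepsilon} < 2n\varepsilon = 2o$ under the convention $o = \varepsilon n$, so $k < 2o$ and hence $|I_{w_<}| \le 2o$, as claimed. There is no genuine analytic obstacle here; the only points requiring a little care are (i) keeping the first bound strict (it is strict precisely because membership in $I_{w_<}$ is defined by the strict inequality $w_i < \frac{1}{2n}$), which is what lets the chain close at $\le 2o$ even though the integer $k$ is being compared against a non-integer bound, and (ii) invoking the relation $o = \varepsilon n$ so that $2n\varepsilon$ may be read as $2o$. If instead only $o \le \varepsilon n$ is assumed, the identical computation still yields the slightly weaker but equally usable bound $|I_{w_<}| \le 2\varepsilon n$.
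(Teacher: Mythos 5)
Your proof is correct and is essentially the paper's argument: the same partition of the total mass $1=\sum_i w_i$ into the $I_{w_<}$ and $I_{w_\geq}$ pieces, bounded respectively by $\tfrac{k}{2n}$ and $\tfrac{n-k}{(1-\varepsilon)n}$, with the identification $o=\varepsilon n$. The only difference is presentational — you solve the resulting linear inequality for $k$ directly, whereas the paper assumes $|I_{w_<}|>2o$ and derives the contradiction $\sum_i w_i<1$ — and your handling of the strictness (plus the trivial $k=0$ case) is sound.
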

 \begin{proof}
 	We assume $|I_{<}| > 2o$ and $o>1$, and then we derive a contradiction.
 	From the constraint about $w_i$,  we have $0\leq w_i \leq  \frac{1}{\left(1-\varepsilon\right)n}$ and 
 	\begin{align*}
 	\sum_{i=1}^n w_i = \sum_{i \in I_{<}} w_i+ \sum_{i \in I_{\geq}} w_i &\leq |I_{_<}| \times \frac{1}{2n} + (n-|I_{<}|) \times \frac{1}{\left(1-\varepsilon\right)n}\\
 	& = 2o \times \frac{1}{2n} + (|I_{<}|-2o) \times \frac{1}{2n} + (n-2o) \times \frac{1}{\left(1-\varepsilon\right)n} +(2o -|I_{<}|) \times \frac{1}{\left(1-\varepsilon\right)n}\\
 	&=2o \times \frac{1}{2n} + (n-2o) \times \frac{1}{\left(1-\varepsilon\right)n} +(|I_{<}|-2o) \times\left(\frac{1}{2n}-\frac{1}{\left(1-\varepsilon\right)n}\right)\\
 	&<2o \times \frac{1}{2n} + (n-2o) \times \frac{1}{\left(1-\varepsilon\right)n}\\
 	& =\frac{1-\varepsilon-\varepsilon^2}{1-\varepsilon}<1.
  	\end{align*}
 	This is a contradiction to the constraint about $\{w_i\}_{i=1}^n$, $\sum_{i=1}^nw_i=1$.
\end{proof}
In the proof of Proposition \ref{p:main:out}, we prove that 
\begin{align}
	\label{ine:adv-spect2}
	\left|\sum_{i \in \mc{O}}\hat{w}_iu_i \vecX_i^\top\vecv \right| \text{ and } 	\left|\sum_{i \in \mc{O}}\hat{w}'_iu_i \vecX_i^\top\vecv \right|
\end{align}
are not different in the view of the convergence rate.

\subsection{WEIGHTED-HUBER-REGRESSION}
\label{sec:whr}
WEIGHTED-HUBER-REGRESSION is a simple regression using Huber loss.
We consider the following optimization problem.
\begin{align}
\label{opt:Hp}
\hat{\vecbeta}= \argmin_{\vecbeta  \in \mbb{R}^d} \sum_{i=1}^n \lambda_o^2 H\left(\hat{w}'_in\frac{y_i-\vecX_i^\top\vecbeta}{\lambda_o\sqrt{n}}\right),
\end{align}
where $H(t)$ is the Huber loss function
\begin{align}
H(t) = \begin{cases}
|t| -1/2 & (|t| > 1) \\
t^2/2  & (|t| \leq 1)
\end{cases}.
\end{align}
and let 
\begin{align}
	h(t) =	\frac{d}{dt} H(t) =   \begin{cases}
	t\quad &(|t| \leq 1)\\
	\mr{sgn}(t)\quad &(|t|  >1)
\end{cases}.
\end{align}

Many studies (\cite{NguTra2012Robust}, \cite{DalTho2019Outlier} and \cite{CheZho2020Robust} and so on ) imply that Huber loss is effective for linear regression under the existence of heavy-tailed noise or adversarial noise. 
As in previous studies, the present paper use the properties.
\section{Result}
\label{sec:results}
We introduce an $L$-subGaussian random vector, which appeared in \cite{Men2016Upper,MenZhi2020Robust,Tho2020Outlier} and other studies.
\begin{definition}[$L$-subGaussian random vector]
	A random vector $\vecx \in \mbb{R}^d$ with the mean $\mbb{E}\vecx = \vecmu$ is said to be an $L$-subGaussian if for every $\vecv \in \mbb{R}^d$ and every $p\geq 2$,
	\begin{align}
		\|\langle \vecx-\vecmu,\vecv\rangle\|_{\psi_2}\leq L\left(\mbb{E}|\langle \vecx-\vecmu,\vecv\rangle|^2\right)^\frac{1}{2},
	\end{align}
	where the norm $\|\cdot\|_{\psi_2}$ is defined in Definition \ref{d:orlicz}.
\end{definition}
\begin{definition}[$\psi_2$-norm]
	\label{d:orlicz}
	Let $f$ be defined on  a probability space. Set
	\begin{align}
		\|f\|_{\psi_2}:=	\inf\left[ \eta>0\,:\, \mbb{E}\exp(f/\eta)^2\leq 2\right] < \infty.
	\end{align}	
\end{definition}
To derive our result, suppose the following assumption.
\begin{assumption}
	\label{a:1}
	Assume that 
	\begin{itemize}
		\item[(i)] $\vecx_i = \Sigma^\frac{1}{2} \vecz_i$, where $\{\vecz_i\}_{i=1}^n$  is a sequence with independent random vectors drawn from an $L$-subGaussian distributions with mean zero and $\Sigma:=\mbb{E}\vecx_i \vecx_i^\top$.
  \item[(ii)] $\lambda_{\min}(\Sigma^\frac{1}{2})> 0$.
		\item[(iii)] $\{\xi_i\}_{i=1}^n$ is a sequence with independent random variables from a distribution whose absolute moment is bounded by $\sigma$.
		\item[(iv)] for $i=1,\cdots,n$, $\mbb{E}h\left(\frac{\xi_i}{\lambda_o\sqrt{n}}\right) \times \vecx_i=0$.
	\end{itemize}
\end{assumption}
\begin{remark}
	The condition (iii) in Assumption \ref{a:1} is a weaker condition than the independence between $\{\xi_i\}_{i=1}^n$ and $\{\vecx_i\}_{i=1}^n$ .
\end{remark}

Under Assumption \ref{a:1}, we have the following theorem.
\begin{theorem}
	\label{t:1:main}
	Suppose that Assumption \ref{a:1} holds. Consider the optimization problem \eqref{opt:Hp}. 
	Let 
	\begin{align}
		r_1= c_1(L)  \times\lambda_o\sqrt{n}\times\frac{\kappa(\Sigma^\frac{1}{2})}{\lambda_{\min}(\Sigma^\frac{1}{2})}\times\left(\sqrt{\frac{r_\Sigma}{n}}+\sqrt{\frac{\log(1/\delta)}{n}}+\sqrt{\frac{o}{n}}\right)
		\end{align}
		with $c_1(L)$ is some constant depending on $L$.
	Suppose that  $\frac{r_\Sigma+\log(1/\delta)}{n}\leq 1$, $n>2o$, $\lambda_o\sqrt{n} \geq 72 L^4 \sigma$ and $r_1 \leq \frac{1}{2\sqrt{3} L^2\sqrt{\|\Sigma\|_{\mr{op}}}}$.
	Then, the optimal solution $\hat{\vecbeta}$ satisfies $\|\hat{\vecbeta} -\vecbeta^*\|_2 < r_1$
	with probability at least $1-5\delta$.
\end{theorem}

\section{Key propositions}
\label{sec:key}
First, we introduce our main proposition.
The main proposition is stated in a `deterministic' style.
Propositions \ref{p:main1}, \ref{p:main:out}, \ref{p:main:out2} and  \ref{p:main:sc} insist that the conditions in the main proposition are satisfied with high probability.

For $v \in \mbb{R}^d$, let
\begin{align}
	r^\circ_{i,\vecv} =\frac{\vecx_i^\top\vecbeta^* +\xi_i-\vecx_i^\top \vecv}{\lambda_o \sqrt{n}}.
\end{align}
\begin{proposition}
\label{t:main}
	Consider the optimization problem \eqref{opt:Hp}.
	For any vector $\vecv\in r\mbb{S}^{d-1}$  and $\vecu \in \mbb{R}^n$ such that $\|\vecu\|_\infty \leq c$, where $c$ is some numerical constant and $r$ is the number defined in \eqref{ine:detassumption6}, suppose that
	\begin{align}
		\label{ine:detassumption1}
		\left|\lambda_o\sqrt{n}\sum_{i=1}^n\frac{1}{n}h\left(\frac{\xi_i}{\lambda_o\sqrt{n}}\right)\vecx_i^\top \vecv\right|&\leq c_1\|\vecv\|_2,\\
		\label{ine:detassumption2}
		\left|\lambda_o\sqrt{n}\sum_{i \in \mc{O}\cup (I_<\cap\mc{I})}\frac{1}{n}u_i\vecx_i^\top \vecv\right|&\leq c_2\|\vecv\|_2,\\
		\label{ine:detassumption4}
		\left|\lambda_o\sqrt{n}\sum_{i \in \mc{O}}\hat{w}'_i u_i \vecX_i^\top \vecv\right|&\leq c_3\|\vecv\|_2,\\
		\label{ine:detassumption5}
		\lambda_o\sqrt{n}\sum_{i =1}^n\frac{1}{n}\left\{-h\left(r^\circ_{i,\vecbeta^*+\vecv}\right) +h\left(r^\circ_{i,\vecbeta^*}\right) \right\}
		\vecx_i^\top\vecv&\geq c_4\|\vecv\|_2^2-c_5\|\vecv\|_2 -c_6,
	\end{align}
	where $c_1,\,c_2,\, c_3, c_4,\, c_5,\,c_6$ are some positive numbers.
	Let 
	\begin{align}
	\label{ine:detassumption6}
	2 \times \frac{c_1+c_2+c_3+c_5+\sqrt{c_4c_6}}{c_4} =  r
	\end{align}
	for some positive number $r$.
	Then, we have
	\begin{align}
		\|\vecbeta^*-\hat{\vecbeta}\|_2 \leq r.
	\end{align}
\end{proposition}

Let $c(L)$ be a numerical constant depending on $L$.
\begin{proposition}
	\label{p:main1}
	Suppose that Assumption \ref{a:1} holds. For any $\vecv \in \mbb{S}^{d-1}$, we have
	\begin{align}
		\left|\frac{1}{n}\sum_{i =1}^n h\left(\frac{\xi_i }{\lambda_o\sqrt{n}}\right)\vecx_i^\top \vecv\right| \leq c(L)\|\Sigma\|_{\mr{op}} \left(\sqrt{\frac{r_\Sigma}{n}} +\sqrt{\frac{\log(1/\delta)}{n}} \right)\|\vecv\|_2.
	\end{align}
	with probability at least $1-\delta$.
\end{proposition}

\begin{proposition}
	\label{p:main:out}
	For any $m$-sparse vector $\vecu= (u_1,\cdots,u_n)$ such that $\|\vecu\|_\infty \leq c$, where $c$ is a numerical constant and for any $\vecv \in \mbb{R}^d$, we have
	\begin{align}
		\left|\sum_{i \in \mc{O}}\hat{w}'_iu_i \vecX^\top\vecv \right| \leq c(L)\sqrt{\|\Sigma\|_{\mr{op}}}\sqrt{\frac{o}{n}}\|\vecv\|_2.
	\end{align}
	with probability at least $1-\delta$. 
\end{proposition}
Define $I_m$ as an index set such that $|I_m| = m$, where $|I|$ is the number of the elements of an index set $I$.
\begin{proposition}
	\label{p:main:out2}
	Suppose that Assumption \ref{a:1} holds. 
	For any $m$-sparse vector $\vecu= (u_1,\cdots,u_n)$ such that $\|\vecu\|_\infty \leq c$, where $c$ is a numerical constant and for any $\vecv \in \mbb{R}^d$, we have
	\begin{align}
		\left|\sum_{i \in I_m}\frac{1}{n}u_i \vecx_i^\top\vecv \right| \leq c(L)\sqrt{\|\Sigma\|_{\mr{op}}} \sqrt{\frac{m}{n}}\|\vecv\|_2,
	\end{align}
	with probability at least $1-\delta$.
\end{proposition}

\begin{proposition}
	\label{p:main:sc}
	Suppose that Assumption \ref{a:1} holds. 
	Let  
	\begin{align}
	\mc{R} = \left\{ \vecv \in \mbb{R}^d\, |\,  \|\vecv\|_2 = r \right\},
	\end{align}
	where $r$ is a number such that $0\leq r \leq \frac{1}{4\sqrt{3} L^4\sqrt{\|\Sigma\|_{\mr{op}}}}$.
	Assume that $\lambda_o  \sqrt{n} \geq 72L^4\sigma$.
	Then, with probability at least $1-\delta$, for any $\vecv \in \mc{R}$, we have
	\begin{align}
		\label{ine:sc}
		&\sum_{i =1}^n\frac{\lambda_o}{\sqrt{n}}\left\{-h\left(r_{i,\vecbeta^*+\vecv}\right) +h\left(r_{i,\vecbeta^*}\right) \right\}
		\vecx_i^\top\vecv\nonumber\\
		&\geq  \frac{\lambda^2_{\min}(\Sigma^\frac{1}{2})}{3}\|\vecv\|_2^2-c(L)\left(\sqrt{\|\Sigma\|_{\mr{op}}} \sqrt{\frac{r_\Sigma}{n}}+\sqrt{\frac{\log(1/\delta)}{n}} \right)\|\vecv\|_2 -c(L)\frac{\log(1/\delta)}{n}.
	\end{align}
\end{proposition}

\section{Proofs of the main theorems}
\label{sec:pmt}
\begin{proof}[proof of Theorem \ref{t:1:main}]
	We show that \eqref{ine:detassumption1}-\eqref{ine:detassumption6}  in Proposition 4.1 are satisfied with probability at least $1-5\delta$ under Assumption 3.1. Then, we see that Theorem 3.1 holds. 

	First, we see that \eqref{ine:detassumption1}-\eqref{ine:detassumption5}
	in Proposition \ref{t:main} are satisfied by Propositions \ref{p:main1}-\ref{p:main:sc} with 
	\begin{align}
		\label{ine:res1}
		c_1 &=\lambda_o\sqrt{n}\times c(L)\|\Sigma\|_{\mr{op}} \left(\sqrt{\frac{r_\Sigma}{n}} +\sqrt{\frac{\log(1/\delta)}{n}} \right),\,c_2=c_3=c(L)\sqrt{\|\Sigma\|_{\mr{op}}} \sqrt{\frac{o}{n}},\nonumber \\
		c_4 &=\frac{\lambda^2_{\min}(\Sigma^\frac{1}{2})}{3},\,c_5 =c(L)\left(\sqrt{\|\Sigma\|_{\mr{op}}} \sqrt{\frac{r_\Sigma}{n}}+\sqrt{\frac{\log(1/\delta)}{n}} \right),\,c_6 =c(L)\frac{\log(1/\delta)}{n},
	\end{align}
	where we use Lemma \ref{l:w2}, with probability at least $1-5\delta$, taking the union bound.

	Next, we confirm \eqref{ine:detassumption6}. From \eqref{ine:res1}, we have
	\begin{align}
		2\frac{c_1+c_2+c_3+c_5+\sqrt{c_4c_6}}{c_4} \leq c_1(L)  \times\lambda_o\sqrt{n}\times\frac{\kappa(\Sigma^\frac{1}{2})}{\lambda_{\min}(\Sigma^\frac{1}{2})}\times\left(\sqrt{\frac{r_\Sigma}{n}}+\sqrt{\frac{\log(1/\delta)}{n}}+\sqrt{\frac{o}{n}}\right)=r_1
		\end{align}
		for a sufficiently large constant  $c(L)$ depending on $L$ and the proof is complete.
\end{proof}

\section{Proofs of Proposition \ref{t:main}-\ref{p:main:sc}}
\label{sec:maindet}
\subsection{Proof of Proposition \ref{t:main}}
Let 
\begin{align}
	r^{adv}_{i,\vecv} =n\hat{w}'_i\frac{y_i-\vecX_i^\top \vecv}{\lambda_o \sqrt{n}}
\end{align}
for $v \in \mbb{R}^d$.

\begin{proof}[Proof of Proposition~\ref{t:main}]
Let $\vecdelta = \hat{\vecbeta}-\vecbeta^*$ and $\vecdelta_{\eta}  = \eta\vecdelta$ for some $\eta \in [0,1]$.
For  any fixed $r>0$, we define
\begin{align}
	\mbb{B}(r) :=\left\{ \vecbeta\, :\, \| \vecbeta^*-\vecbeta\|_2 \leq r\right\}.
\end{align}
We prove $\hat{\vecbeta} \in \mbb{B}(r)$ by assuming $\hat{\vecbeta} \notin \mbb{B}(r)$ and deriving a contradiction. For $\hat{\vecbeta} \notin \mbb{B}(r)$, we can find some $\eta  \in (0,1)$ such that
\begin{align}
\label{ine:det:cont}
	\|\vecdelta_\eta\|_2 = r.
\end{align}
Let 
\begin{align}
	Q'(\eta) =\lambda_o\sqrt{n}\sum_{i=1}^n \hat{w}'_i\left\{-h\left(r^{adv}_{i,\vecbeta^*+\vecdelta_\eta}\right) +h\left(r^{adv}_{i,\vecbeta^*}\right) \right\}\vecX_i^\top \vecdelta.
\end{align}
From the proof of Lemma F.2. of \cite{FanLiuSunZha2018Lamm}, we have $\eta Q'(\eta) \leq \eta Q'(1)$ and this means
\begin{align}
\label{ine:det:1}
\eta \lambda_o\sqrt{n}\sum_{i=1}^n \hat{w}'_i\left\{-h\left(r^{adv}_{i,\vecbeta^*+\vecdelta_\eta}\right) +h\left(r^{adv}_{i,\vecbeta^*}\right) \right\}\vecX_i^\top \vecdelta 
&\leq \eta\lambda_o\sqrt{n}\sum_{i=1}^n \hat{w}'_i\left\{-h\left(r^{adv}_{i,\hat{\vecbeta}}\right) +h\left(r^{adv}_{i,\vecbeta^*}\right) \right\}\vecX_i^\top \vecdelta\nonumber\\
&\stackrel{(a)}{=}\lambda_o\sqrt{n}\sum_{i=1}^n\hat{w}'_i h\left(r^{adv}_{i,\vecbeta^*}\right)\vecX_i^\top \vecdelta_\eta
\end{align}
where (a) follows from the fact that  $\hat{\vecbeta}$ is a optimal solution of~\eqref{opt:Hp} and from the definition of $\vecdelta_\eta$.
For the right hand side of \eqref{ine:det:1}, from triangular inequality, we have
\begin{align}
	\label{ine:det:1:right}
	&\sum_{i=1}^n\hat{w}'_i h\left(r^{adv}_{i,\vecbeta^*}\right)\vecX_i^\top \vecdelta_\eta=\sum_{i=1}^n\frac{1}{n} h\left(r^{\circ}_{i,\vecbeta^*}\right)\vecx_i^\top \vecdelta_\eta+\sum_{i \in \mc{O}}\hat{w}'_i h\left(r^{adv}_{i,\vecbeta^*}\right)\vecX_i^\top\vecdelta_\eta-\sum_{i\in \mc{O}\cup(I_<\cap \mc{I})}\frac{1}{n} h\left(r^\circ_{i,\vecbeta^*}\right)\vecx_i^\top \vecdelta_\eta.
\end{align}
The left-hand side of~\eqref{ine:det:1} can be decomposed as
\begin{align}
\label{ine:det:1:left}
&\lambda_o\sqrt{n}\sum_{i=1}^n\hat{w}'_i \left\{-h\left(r^{adv}_{i,\vecbeta^*+\vecdelta_\eta}\right) +h\left(r^{adv}_{i,\vecbeta^*}\right) \right\}\vecX_i^\top \vecdelta_\eta\nonumber\\
&=\lambda_o\sqrt{n}\sum_{i=1}^n \frac{1}{n}\left\{-h\left(r^\circ_{i,\vecbeta^*+\vecdelta_\eta}\right) +h\left(r^\circ_{i,\vecbeta^*}\right) \right\}\vecx_i^\top \vecdelta_\eta + \lambda_o\sqrt{n}\sum_{i\in \mc{O}}\hat{w}'_i \left\{-h\left(r^{adv}_{i,\vecbeta^*+\vecdelta_\eta}\right) +h\left(r^{adv}_{i,\vecbeta^*}\right) \right\}\vecX_i^\top \vecdelta_\eta\nonumber\\
&\quad \quad \quad -\lambda_o\sqrt{n}\sum_{i\in \mc{O}\cup(I_<\cap \mc{I})} \frac{1}{n}\left\{-h\left(r^\circ_{i,\vecbeta^*+\vecdelta_\eta}\right) +h\left(r^\circ_{i,\vecbeta^*}\right) \right\}\vecx_i^\top \vecdelta_\eta.
\end{align}
From \eqref{ine:det:1} - \eqref{ine:det:1:left}, we have
\begin{align}
\label{ine:det:main}
&\lambda_o\sqrt{n}\sum_{i=1}^n \frac{1}{n}\left\{-h\left(r^\circ_{i,\vecbeta^*+t(\hat{\vecbeta}-\vecbeta^*)}\right) +h\left(r^\circ_{i,\vecbeta^*}\right) \right\}\vecx_i^\top \vecdelta_\eta\nonumber\\
&\leq \lambda_o\sqrt{n}\sum_{i=1}^n\frac{1}{n} h\left(r^{\circ}_{i,\vecbeta^*}\right)\vecx_i^\top \vecdelta_\eta+\lambda_o\sqrt{n}\sum_{i \in \mc{O}}\hat{w}'_i h\left(r^{adv}_{i,\vecbeta^*+\vecdelta_\eta}\right) \vecX_i^\top\vecdelta_\eta-\lambda_o\sqrt{n}\sum_{i\in \mc{O}\cup(I_<\cap \mc{I})}\frac{1}{n} h\left(r^\circ_{i,\vecbeta^*+\vecdelta_\eta}\right)\vecx_i^\top \vecdelta_\eta
\end{align}
For each term of \eqref{ine:det:main},
from \eqref{ine:detassumption1}, \eqref{ine:detassumption2}, \eqref{ine:detassumption4} and \eqref{ine:detassumption5}, we have
\begin{align}
	&\left|\lambda_o\sqrt{n}\frac{1}{n}\sum_{i=1}^n h\left(r^\circ_{i,\vecbeta^*}\right)\vecx_i^\top \vecdelta_\eta\right|  \leq c_1 \|\vecdelta_\eta\|_2,\\
	&\left|\lambda_o\sqrt{n}\sum_{i\in \mc{O}\cup (I_<\cap\mc{I})}\frac{1}{n} h\left(r^\circ_{i,\vecbeta^*+\vecdelta_\eta}\right) \vecX_i^\top \vecdelta_\eta\right| \leq c_2 \|\vecdelta_\eta\|_2,\\
	&\left|\lambda_o\sqrt{n}\sum_{i \in \mc{O}} \hat{w}_i'h\left(r^\circ_{i,\vecbeta^*+\vecdelta_\eta}\right)\vecx_i^\top \vecdelta_\eta\right| \leq c_3 \|\vecdelta_\eta\|_2,\\
	&\lambda_o\sqrt{n}\sum_{i=1}^n \frac{1}{n}\left\{-h\left(r^\circ_{i,\vecbeta^*+\vecdelta_\eta}\right) +h\left(r^\circ_{i,\vecbeta^*}\right) \right\}\vecx_i^\top \vecdelta_\eta\geq c_4\|\vecdelta_\eta\|_2^2-c_5\|\vecdelta_\eta\|_2 -c_6.
\end{align}
Consequently, we have
\begin{align}
	c_4\|\vecdelta_\eta\|_2^2-c_5\|\vecdelta_\eta\|_2 -c_6 \leq (c_1+c_2+c_3)\|\vecdelta_\eta\|_2
\end{align}
and 
\begin{align}
\|\vecdelta_\eta\|_2 \leq \frac{c_1+c_2+c_3+c_5+\sqrt{(c_1+c_2+c_3+c_5)^2+4c_4c_6}}{2c_4} < 2\frac{c_1+c_2+c_3+c_5+\sqrt{c_4c_6}}{c_4} = r.
\end{align}
This contradicts $\|\vecdelta_\eta\|_2 = r$.
Consequently, we have $\hat{\vecbeta} \in \mbb{B}(r)$ and $\|\vecdelta\|_2 = \|\hat{\vecbeta}-\vecbeta^*\|_2 \leq r$.
\end{proof}\

\subsection{Proofs of Propositions \ref{p:main1}, \ref{p:main:out}, \ref{p:main:out2} and  \ref{p:main:sc}}
\label{sec:p}
\subsubsection{Some tools}
\label{tool}
In Section \ref{tool}, we introduce a useful tool used to prove Propositions \ref{p:main1}, \ref{p:main:out}, \ref{p:main:out2} and  \ref{p:main:sc}.
\begin{theorem}[Theorem 4 of \cite{KolLou2017Con}, Theorem 9.2.4 of \cite{Ver2018High}]
	\label{t:partialcov}
	Assume that $\{\vectau_i\}_{i=1}^n$  is a sequence with independent random vectors drawn from an $L$-subGaussian distribution with $\mbb{E}\tau_i =0$ and $\mbb{E}\tau_i \tau_i^\top = \Sigma$. Then, 
	we have
	\begin{align}
		\left\|\frac{1}{n}\sum_{i=1}^n \vectau_i \vectau_i^{\top} -\Sigma\right\|_{\mr{op}} \leq c(L) \sqrt{\|\Sigma\|_{\mr{op}}}  \left(\sqrt{\frac{r_\Sigma+t}{m}} + \frac{r_\Sigma+t}{m}\right)
	\end{align}
	with probability at least $1-e^{-t}$.
\end{theorem}

\subsubsection{Proof of proposition \ref{p:main1}}
	For any $\vecv_1,\vecv_2 \in \mbb{S}^{d-1}$, we have
	\begin{align}
		\left\|h\left(\frac{\xi_i}{\lambda_o\sqrt{n}}\right)\vecx_i^\top \vecv_1-h\left(\frac{\xi_i}{\lambda_o\sqrt{n}}\right)\vecx_i^\top \vecv_2\right\|_{\psi_2} &=\left\|h\left(\frac{\xi_i}{\lambda_o\sqrt{n}}\right)\vecz_i^\top \Sigma^\frac{1}{2}\vecv_1-h\left(\frac{\xi_i}{\lambda_o\sqrt{n}}\right)\vecz_i^\top\Sigma^\frac{1}{2} \vecv_2\right\|_{\psi_2} \nonumber \\
		&\leq L \|\Sigma^\frac{1}{2}\vecv_1-\Sigma^\frac{1}{2}\vecv_2\|_2
	\end{align}
	because $\left|h\left(\frac{\xi_i}{\lambda_o \sqrt{n}}\right)\right|\leq 1$.
	From the fact that $\left\{h\left(\frac{\xi_i}{\lambda_o\sqrt{n}}\right)\vecz_i^\top \Sigma^\frac{1}{2}\right\}_{i=1}^n$ is an i.i.d. sequence, we have
	\begin{align}
		\left\|\frac{1}{n}\sum_{i=1}^nh\left(\frac{\xi_i}{\lambda_o\sqrt{n}}\right)\vecz_i^\top \Sigma^\frac{1}{2}\vecv_1-\frac{1}{n}\sum_{i=1}^nh\left(\frac{\xi_i}{\lambda_o\sqrt{n}}\right)\vecz_i^\top \Sigma^\frac{1}{2}\vecv_2\right\|_{\psi_2} \leq L \|\Sigma^\frac{1}{2}\vecv_1-\Sigma^\frac{1}{2}\vecv_2\|_2.
	\end{align}
	From Exercise 8.6.5 of \cite{Ver2018High}, with probability at least $1-\delta$, 
	we have
	\begin{align}
		\label{gc-pre2}
		\sup_{\vecv \in \Sigma^\frac{1}{2}\mbb{S}^{d-1}}\left|\frac{1}{n} \sum_{i=1}^n h\left(\frac{\xi_i}{\lambda_o\sqrt{n}}\right) \vecz_i^\top \vecv  \right| \leq \frac{c(L)}{\sqrt{n}} \left(\mbb{E}\sup_{\vecv \in \Sigma^\frac{1}{2}\mbb{S}^{d-1}} \vecg_d^\top \vecv +\sqrt{\log(1/\delta)} \sup_{\vecu,\vecv \in \Sigma^\frac{1}{2}\mbb{S}^{d-1}}\sqrt{\|\vecu-\vecv\|^2_2}\right),
	\end{align}
	where $\vecg_d$ is the $d$-dimensional standard normal Gaussian vector.
 	From the proof of Theorem 9.2.4 of \cite{Ver2018High}, we have
	\begin{align}
		\mbb{E}\sup_{\vecv \in \Sigma^\frac{1}{2}\mbb{S}^{d-1}} \vecg_d^\top \vecv\leq \sqrt{\mr{Tr}(\Sigma)} 
	\end{align}
	and from simple algebra, we have
	\begin{align}
		\sup_{\vecu,\vecv \in \Sigma^\frac{1}{2}\mbb{S}^{d-1}}\sqrt{\|\vecu-\vecv\|^2_2} \leq 2 \sqrt{\|\Sigma\|_{\mr{op}}}.
	\end{align}
	Lastly, we have
	\begin{align}
		\sup_{\vecv \in \mbb{S}^{d-1}}\left|\frac{1}{n} \sum_{i=1}^n h\left(\frac{\xi_i}{\lambda_o\sqrt{n}}\right) \vecx_i^\top \vecv  \right| =\sup_{\vecv \in \Sigma^\frac{1}{2}\mbb{S}^{d-1}}\left|\frac{1}{n} \sum_{i=1}^n h\left(\frac{\xi_i}{\lambda_o\sqrt{n}}\right) \vecz_i^\top \vecv  \right|  \leq \frac{c(L)}{\sqrt{n}} \sqrt{\|\Sigma\|_{\mr{op}}} \left(\sqrt{r_\Sigma} +\sqrt{\log(1/\delta)} \right).
	\end{align}
	and let $\vecv$ be $\vecv / \|\vecv\|$, the proof is complete.

\subsubsection{Proof of Proposition \ref{p:main:out}}
	For any $\|\vecu\|_\infty \leq c$ and $\vecv \in \mbb{S}^{d-1}$, 
	we note that
	\begin{align}
		\label{ine:6-2}
		\left|\sum_{i \in \mc{O}}\hat{w}'_iu_i \tilde{\vecX}_i^\top\vecv  \right|^2 \stackrel{(a)}{\leq}  \sum_{i \in \mc{O}} \hat{w}'_iu_i^2 \sum_{i \in \mc{O}}\hat{w}'_i |\tilde{\vecX}_i^\top \vecv|^2 \stackrel{(b)}{\leq} c^2\frac{o}{n}\sum_{i \in \mc{O}}\hat{w}_i |\tilde{\vecX}_i^\top \vecv|^2,
	\end{align}
	where (a) follows from H{\"o}lder's inequality and  (b) follows from the fact that $\hat{w}_i'\leq 2\hat{w}_i$.
	From  \eqref{ine:6-2}, we have 
	\begin{align}
		\left|\sum_{i \in \mc{O}}\hat{w}'_iu_i \vecX_i^\top\vecv  \right| \leq c\sqrt{\frac{o}{n}}\sqrt{\sum_{i =1}^n\hat{w}_i (\vecX_i^\top\vecv)^2} \leq c\sqrt{\frac{o}{n}}\sqrt{\sup_{\vecv \in\mbb{S}^{d-1}}\sum_{i =1}^n\hat{w}_i (\vecX_i^\top\vecv)^2}
	\end{align}
	From the optimality of $\{\hat{w}_i\}_{i=1}^n$, we have
	\begin{align}
		\sup_{\vecv \in\mbb{S}^{d-1}}\sum_{i =1}^n\hat{w}_i (\vecX_i^\top \vecv)^2=\lambda_{\max} \left(\sum_{i =1}^n\hat{w}_i \vecX_i \vecX_i^\top \right)\stackrel{(a)}{\leq} \lambda_{\max} \left(\sum_{i \in \mc{I}}\frac{1}{n(1-\varepsilon)} \vecX_i \vecX_i^\top \right)&=\sup_{\vecv \in \mbb{S}^{d-1}} \sum_{i \in \mc{I}}\frac{1}{n-o} (\vecX_i^\top \vecv)^2\nonumber \\
		&=\sup_{\vecv \in \mbb{S}^{d-1}} \sum_{i \in \mc{I}}\frac{1}{n-o} (\vecx_i^\top \vecv)^2
	\end{align}
	where $(a)$ follows from the fact that $\{w^{\circ}_i\}_{i\in \mc{I}} \in \Delta^{n-1}$ with $w^{\circ}_i = \frac{1}{n(1-\varepsilon)}$.
	Moreover, from the positivity of $(\vecx_i^\top \vecv)$, we have
	\begin{align}
		\sup_{\vecv \in \mbb{S}^{d-1}} \sum_{i \in \mc{I}}\frac{1}{n-o} (\vecx_i^\top \vecv)^2\leq \sup_{\vecv \in \mbb{S}^{d-1}} \sum_{i=1}^n\frac{1}{n-o} (\vecx_i^\top \vecv)^2
	\end{align}
	From Theorem \ref{t:partialcov} and the triangular inequality, we have
	\begin{align}
		\left(\left\|\sum_{i=1}^n\frac{1}{n} \vecx_i \vecx_i^\top\right\|_{\mr{op}} =\right)\sup_{\vecv \in \mbb{S}^{d-1}}\sum_{i=1}^n\frac{1}{n} (\vecx_i^\top \vecv)^2&\leq c(L) \sqrt{\|\Sigma\|_{\mr{op}}}  \left(\sqrt{\frac{r_\Sigma+t}{n}} + \frac{r_\Sigma+t}{n}+1\right)\leq c(L) \sqrt{\|\Sigma\|_{\mr{op}}},
 	\end{align}
	 where we use the assumption $\frac{r_\Sigma+t}{n}\leq 1$
	Combining the arguments above, we have
	\begin{align}
		\left|\sum_{i \in \mc{O}}\hat{w}'_iu_i \vecX_i^\top\vecv  \right| \leq c(L)\sqrt{\frac{o}{n}}\sqrt{\|\Sigma\|_{\mr{op}}},
	\end{align}
	where we use $o> 2n$. Re-defining $\vecv = \vecv/\|\vecv\|_2(\in \mbb{S}^{d-1})$, the proof is complete.

\subsubsection{Proof of Proposition \ref{p:main:out2}}
	For any $\|\vecu\|_\infty \leq c$ and $\vecv \in \mbb{S}^{d-1}$, 
	from H{\"o}lder's inequality, we have 
	\begin{align}
		\sum_{i \in I_m}\frac{1}{n}u_i \vecx_i^\top\vecv    \leq \sqrt{\sum_{i \in I_m}\frac{1}{n}u_i^2 }\sqrt{\sum_{i \in I_m}\frac{1}{n}(\vecx_i^\top\vecv)^2 }\leq \sqrt{\sum_{i \in I_m}\frac{1}{n}u_i^2 }\sqrt{\sum_{i =1}^n\frac{1}{n} (\vecx_i^\top\vecv)^2 }\leq c\sqrt{\frac{m}{n}}\sqrt{\sum_{i =1}^n\frac{1}{n} (\vecx_i^\top\vecv)^2 }.
	\end{align}
	From Theorem \ref{t:partialcov} and the triangular inequality, we have
	\begin{align}
		\left(\left\|\sum_{i=1}^n\frac{1}{n} \vecx_i \vecx_i^\top\right\|_{\mr{op}} =\right)\sup_{\vecv \in \mbb{S}^{d-1}}\sum_{i=1}^n\frac{1}{n} (\vecx_i^\top \vecv)^2&\leq c(L) \sqrt{\|\Sigma\|_{\mr{op}}}  \left(\sqrt{\frac{r_\Sigma+t}{n}} + \frac{r_\Sigma+t}{n}+1\right)\leq c(L) \sqrt{\|\Sigma\|_{\mr{op}}},
 	\end{align}
	 where we use the assumption $\frac{r_\Sigma+t}{n}\leq 1$
	Combining the arguments above, we have
	\begin{align}
		\left|\sum_{i \in I_m}\hat{w}'_iu_i \vecx_i^\top\vecv  \right| \leq c(L)\sqrt{\frac{m}{n}}\sqrt{\|\Sigma\|_{\mr{op}}}.
	\end{align}
	Re-defining $\vecv = \vecv/\|\vecv\|_2(\in \mbb{S}^{d-1})$, the proof is complete.

\subsubsection{Proof of Proposition \ref{p:main:sc}}
	Let
	\begin{align}
		u_i = \frac{\xi_i}{\lambda_o \sqrt{n}}, \quad v_i = \frac{\vecx_i^\top \vecv}{\lambda_o \sqrt{n}}.
	\end{align}
 	The left-hand side of \eqref{ine:sc} divided by $\lambda_o^2$ can be expressed as
	\begin{align}
		\sum_{i=1}^n  \left\{-h\left(\frac{\xi_i -\vecx_i^\top \vecv}{\lambda_o\sqrt{n}}\right)+h \left(\frac{\xi_i}{\lambda_o\sqrt{n}}\right) \right\}\frac{\vecx_i^\top \vecv}{\lambda_o \sqrt{n}} = \sum_{i=1}^n  \left\{-h\left(u_i-v_i\right)+h \left(u_i\right) \right\}v_i
	\end{align}
	From the convexity of the Huber loss,  we have $ \left\{-h\left(u_i-v_i\right)+h \left(u_i\right) \right\}v_i>0$ and 
	\begin{align}
		&\sum_{i=1}^n  \left\{-h\left(u_i-v_i\right)+h \left(u_i\right) \right\}v_i \geq \sum_{i=1}^n  \left\{-h\left(u_i-v_i\right)+h \left(u_i\right) \right\}v_i\mr{I}_{E_i},
	\end{align}
	where $\mr{I}_{E_i}$ is the indicator function of the event
	\begin{align}
		E_i :=  \left( \left|u_i\right | \leq \frac{1}{2} \right)  \cap \left(   \left |v_i \right |  \leq  \frac{1}{2\lambda_o\sqrt{n}}   \right).
	\end{align}
	Define the functions
	\begin{align}
	\label{def:phipsi}
		\varphi(v) =\begin{cases}
		v^2   &  \mbox{ if }  |v | \leq  \frac{1}{2\lambda_o\sqrt{n}}\\
		(v-1/2)^2   &  \mbox{ if }  \frac{1}{2\lambda_o\sqrt{n}}\leq v  \leq  1/2 \\
		(v+1/2)^2   &  \mbox{ if }  -1/2\leq v  \leq  -\frac{1}{2\lambda_o\sqrt{n}}   \\
		0 & \mbox{ if } |v| >1/2
	\end{cases} ~\mbox{ and }~
		\psi(u) = I_{(|u| \leq 1/2 ) }.
	\end{align}
	Let   $f_i(\vecv) = \varphi(v_i) \psi(u_i)$ and we have
	\begin{align}
	\label{ine:huv-conv-f}
		\sum_{i=1}^n  \left\{-h\left(u_i-v_i\right)+h \left(u_i\right) \right\}v_i &\geq \sum_{i=1}^n  \left\{-h\left(u_i-v_i\right)+h \left(u_i\right) \right\}v_i\mr{I}_{E_i}\nonumber\\
		&= \sum_{i=1}^n  v_i^2\mr{I}_{E_i}\nonumber\\
		& \stackrel{(a)}{\geq} \sum_{i=1}^n  \varphi(v_i) \psi(u_i)=\sum_{i=1}^n f_i(\vecv),
	\end{align}
	where (a) follows from $\varphi(v) \leq v^2$ for $|v| \leq 1/2$. We note that 
	\begin{align}
	\label{ine:f-1/4}
		f_i(\vecv) \leq\varphi(v_i) \leq \min\left(\frac{(\vecx_i^\top \vecv)^2}{\lambda_o^2n},\frac{1}{4\lambda_o^2n}\right).
	\end{align}
	To bound $\sum_{i=1}^n f_i(\vecv)$ from below, for any fixed $ \vecv \in  \mc{R}$, we have
	\begin{align}
	\label{ine:fbelow}
		\sum_{i=1}^n f_i(\vecv)&\geq \mbb{E}f(\vecv) -\sup_{\vecv' \in \mc{R}}  \Big|\sum_{i=1}^n f_i(\vecv')-\mbb{E}\sum_{i=1}^n f_i(\vecv')\Big|.
	\end{align}
	Define the supremum of a random process indexed by $\mc{R}$:
	\begin{align}
	\label{ap:delta}
		\Delta  :=  \sup_{ \vecv' \in \mc{R}} \left| \sum_{i=1}^n f_i(\vecv') - \mbb{E}\sum_{i=1}^n f_i	(\vecv') \right| .  
	\end{align}
	From \eqref{ine:huv-conv-f} and \eqref{def:phipsi}, we have
	\begin{align}
	\label{ine:aplower:tmp}
		\mbb{E}\sum_{i=1}^n f_i(\vecv)&\geq \sum_{i=1}^n\mbb{E} \left|\frac{ \vecx_i^\top \vecv}{\lambda_o\sqrt{n}} \right| ^2 - \sum_{i=1}^n\mbb{E}\left|\frac{\vecx_i^\top \vecv}{\lambda_o\sqrt{n}} \right| ^2 I \bigg( \left|\frac{\vecx_i^\top \vecv}{\lambda_o\sqrt{n}} \right| \geq \frac{1}{2\lambda_o\sqrt{n}}   \bigg) -  \mbb{E}\left|\frac{ \vecx_i^\top \vecv}{\lambda_o\sqrt{n}} \right| ^2 I\left( \left|\frac{\xi_i}{\lambda_o\sqrt{n}} \right|> \frac{1}{2} \right).
	\end{align}
	We note that
	\begin{align}
	\label{ine:v2}
		\mbb{E}(\vecx_i^\top \vecv)^2 = \|\Sigma^\frac{1}{2}\vecv\|_2^2 \leq \|\Sigma\|_{\mr{op}} \|\vecv\|_2^2.
	\end{align}
	We evaluate the right-hand side of \eqref{ine:aplower:tmp} at each term.
	First, we have
	\begin{align}
	\label{ap:ine:cov1}
		\sum_{i=1}^n\mbb{E} \left|\frac{\vecx_i^\top \vecv}{\lambda_o\sqrt{n}} \right| ^2 I \bigg( \left|\frac{\vecx_i^\top \vecv}{\lambda_o\sqrt{n}} \right| \geq \frac{1}{2\lambda_o\sqrt{n}}   \bigg) 
		&\stackrel{(a)}{\leq} 	\sum_{i=1}^n\sqrt{\mbb{E} \left|\frac{\vecx_i^\top \vecv}{\lambda_o\sqrt{n}} \right| ^4 } \sqrt{\mbb{E}   \ I \bigg( \left|\frac{\vecx_i^\top \vecv}{\lambda_o\sqrt{n}} \right| \geq \frac{1}{2\lambda_o\sqrt{n}}   \bigg) }\nonumber\\
		&\stackrel{(b)}{=} 	\sum_{i=1}^n\sqrt{\mbb{E}\left|\frac{\vecx_i^\top \vecv}{\lambda_o\sqrt{n}} \right| ^4 } \sqrt{\mbb{P}  \left(  \left|\vecx_i^\top \vecv\right| \geq \frac{1}{2}\right) }\nonumber\\
		&\stackrel{(c)}{\leq}	\sum_{i=1}^n4\sqrt{\mbb{E} \left|\frac{\vecx_i^\top \vecv}{\lambda_o\sqrt{n}} \right| ^4 } \sqrt{ \mbb{E}(\vecx_i^\top \vecv)^4}\nonumber\\
		&=\frac{4}{\lambda_o^2} \mbb{E}(\vecx_i^\top \vecv)^4\nonumber\\
		&\stackrel{(d)}{\leq}\frac{4L^4}{\lambda_o^2} \{\mbb{E}(\vecx_i^\top \vecv)^2\}^2\nonumber\\
		&=\frac{4L^4}{\lambda_o^2}  \|\Sigma^\frac{1}{2}\vecv\|_2^4\nonumber\\
		&\leq\frac{4L^4}{\lambda_o^2}  \|\Sigma^\frac{1}{2}\vecv\|_2^2 \|\Sigma\|_{\mr{op}}\|\vecv\|_2^2\stackrel{(f)}{\leq} \frac{1}{3\lambda_o^2 }	\|\Sigma^\frac{1}{2}\vecv\|_2^2,
	\end{align}
		where (a) follows from H{\"o}lder's inequality, (b) follows from the relation between indicator function and expectation, (c) follows from Markov's inequality, (d) follows from Assumption \ref{a:1} and  (f) follows the definition of $\mc{R}$.
		Second,  we have
	\begin{align}
	\label{ap:ine:cov2}
		\sum_{i=1}^n\mbb{E} \left|\frac{\vecx_i^\top \vecv}{\lambda_o\sqrt{n}} \right| ^2 I\left( \left|\frac{\xi_i}{\lambda_o\sqrt{n}} \right|\geq \frac{1}{2} \right) 
		&\stackrel{(a)}{\leq} \sum_{i=1}^n\sqrt{\mbb{E} \left|\frac{\vecx_i^\top \vecv}{\lambda_o\sqrt{n}} \right| ^4}  \sqrt{\mbb{E}I\left( \left|\frac{\xi_i}{\lambda_o\sqrt{n}} \right|\geq \frac{1}{2} \right)}\nonumber\\
		&\stackrel{(b)}{\leq}\sum_{i=1}^n\sqrt{\mbb{E}\left|\frac{\vecx_i^\top \vecv}{\lambda_o\sqrt{n}} \right| ^4}  \sqrt{\mbb{P}  \left( \left|\frac{\xi_i}{\lambda_o\sqrt{n}} \right|\geq \frac{1}{2}  \right)}\nonumber\\	  
		&\stackrel{(c)}{\leq}\sum_{i=1}^n \sqrt{\frac{2}{\lambda_o\sqrt{n}}}\sqrt{\mbb{E} \left|\frac{  \vecx_i^\top \vecv}{\lambda_o\sqrt{n}} \right| ^4}  \sqrt{\mbb{E}|\xi_i|}\nonumber\\	  
		&\stackrel{(d)}{\leq}\sum_{i=1}^n \sqrt{ \frac{2\sigma}{\lambda_o\sqrt{n}} }\frac{2L^2}{\lambda_o^2n}  \mbb{E}(\vecx_i^\top \vecv)^2 \nonumber \\
		&\stackrel{(e)}{\leq} \frac{1}{3\lambda_o^2} 	\mbb{E}(\vecx_i^\top \vecv)^2\nonumber\\
		&=\frac{1}{3\lambda_o^2} 	\|\Sigma^\frac{1}{2}\vecv\|_2^2,
	\end{align}
	where (a) follows from H{\"o}lder's inequality, (b) follows from relation between indicator function and expectation, (c) follows from Markov's inequality, (d) follows from the assumption of Theorem  \ref{a:1} and  (e) follows from the definition of $\lambda_o$.
	Consequently, we have
	\begin{align}
	\label{ap:f_bellow}
		\mbb{E}\sum_{i=1}^nf_i(\vecv)&\geq \frac{1}{3\lambda_o^2}	\|\Sigma^\frac{1}{2}\vecv\|_2^2 \geq \frac{1}{3}\lambda^2_{\min}(\Sigma^\frac{1}{2}) \|\vecv\|_2^2
	\end{align}
	and
	\begin{align}
	\label{ap:h_bellow}
		\sum_{i=1}^n  \left\{-h\left(u_i-v_i\right)  -h \left(u_i\right) \right\}v_i  \geq \sum_{i=1}^nf_i(\vecv) \geq   \frac{\lambda^2_{\min}(\Sigma^\frac{1}{2})}{3\lambda_o^2}	\|\vecv\|_2^2-\Delta.
	\end{align}
	Next we evaluate the stochastic term $\Delta$ defined in \eqref{ap:delta}. 
	From \eqref{ine:f-1/4} and Theorem 3 of \cite{Mas2000Constants}, with probability at least $1-\delta$, we have
	\begin{align}
	\label{ine:delta}
		\Delta & \leq 2 \mbb{E} \Delta + \sigma_f \sqrt{8\log(1/\delta)} + 5\frac{\log(1/\delta)}{\lambda_o^2 n},
	\end{align}
	where $\sigma^2_f= \sup_{ \vecv \in \mc{R}} \sum_{i=1}^n\mbb{E}  \{f_i(\vecv)-\mbb{E}f_i(\vecv)\}^2$.
	About $\sigma_f$, we have
	\begin{align}
		\mbb{E}\{f_i(\vecv)-\mbb{E}f_i(\vecv)\}^2 \leq \mbb{E}f_i^2(\vecv) .
	\end{align}
	From  \eqref{ine:f-1/4}, we have
	\begin{align}
	\label{ap:ine:cov3}
		&\mbb{E}f_i^2(\vecv)\leq \mbb{E} \frac{(\vecx_i^\top \vecv)^4}{\lambda_o^4n^2}= \frac{L^4}{\lambda_o^4n^2} \|\vecv\|_2^4.
	\end{align}
	Combining this and \eqref{ine:delta}, we have
	\begin{align}
	\label{ap:delta_upper}
		\Delta &\leq 2 \mbb{E} \Delta+ \frac{1}{\lambda_o^2}L^2\sqrt{\|\Sigma\|_{\mr{op}}}\sqrt{8\frac{\log(1/\delta)}{n}}\|\vecv\|_2^2+ 5\frac{\log(1/\delta)}{\lambda_o^2 n}\nonumber\\
		&\leq 2 \mbb{E} \Delta+ \frac{1}{\lambda_o^2}\sqrt{\frac{\log(1/\delta)}{n}}\|\vecv\|_2+ 5\frac{\log(1/\delta)}{\lambda_o^2 n}.
	\end{align}
	From Symmetrization inequality (Lemma 11.4 of \cite{BouLugMas2013concentration}), we have  $\mbb{E}\Delta \leq 2   \,\mbb{E} \sup_{ \vecv \in \mc{R}} |  \mathbb{G}_{\vecv} |  $,
	where 
	\begin{align}
		\mbb{G}_{\vecv} := \sum_{i=1}^n 
		a_i \varphi \left( \frac{\vecx_i^\top \vecv}{\lambda_o\sqrt{n}} \right) \psi \left(\frac{\xi_i}{\lambda_o\sqrt{n}} \right),
	\end{align} 
	and $\{a_i\}_{i=1}^n$ is a sequence of i.i.d. Rademacher random variables which is independent of $\{\vecx_i,\xi_i\}_{i=1}^n$.
	We   denote $\mbb{E}^*$ as a conditional variance of $\left\{a_i\right\}_{i=1}^n$ given $\left\{\vecx_i,\xi_i\right\}_{i=1}^n$. From contraction principal (Theorem 11.5 of \cite{BouLugMas2013concentration}),  we  have
	\begin{align}
		&\mbb{E} ^*\sup_{\vecv\in\mc{R}} \left|     \sum_{i=1}^n a_i \varphi \left( \frac{\vecx_i^\top \vecv}{\lambda_o\sqrt{n}} \right) \psi \left(\frac{\xi_i}{\lambda_o\sqrt{n}} \right)    \right| \leq	\mbb{E}^* \sup_{\vecv\in\mc{R}} \left|  \sum_{i=1}^n   a_i \varphi \left( \frac{\vecx_i^\top \vecv}{\lambda_o\sqrt{n}} \right) \right|
	\end{align}	
	and from the basic property of the expectation, we have
	\begin{align}
		\mbb{E}\sup_{\vecv\in\mc{R}}  \left|     \sum_{i=1}^n a_i \varphi \left( \frac{\vecx_i^\top \vecv}{\lambda_o\sqrt{n}} \right) \psi \left(\frac{\xi_i}{\lambda_o\sqrt{n}} \right)    \right| &\leq	
		\mbb{E}\sup_{\vecv\in\mc{R}}  \left|  \sum_{i=1}^n   a_i \varphi\left( \frac{\vecx_i^\top \vecv}{\lambda_o\sqrt{n}} \right) \right|.
	\end{align}	
	Since $\varphi$ is $\frac{1}{2\lambda_o\sqrt{n}}$-Lipschitz and $\varphi(0)=0$,  from contraction principal (Theorem 11.6 in \cite{BouLugMas2013concentration}), we have
	\begin{align}
		\mbb{E} \sup_{\vecv\in\mc{R}} \left|\sum_{i=1}^n a_i \varphi \left( \frac{\vecx_i^\top \vecv}{\lambda_o\sqrt{n}} \right)\right|&\leq	\mbb{E}\sup_{\vecv\in\mc{R}}  \left|  \sum_{i=1}^n   a_i  \frac{\vecx_i^\top \vecv}{2\lambda_o^2n}  \right|.
	\end{align}
	From Lemma \ref{p:1e} and the definition of $\mc{R}$, we have
	\begin{align}
	\label{ine:hub-stoc-upper}
		2\lambda_o^2\mbb{E} \Delta \leq c(L)\sqrt{\|\Sigma\|_{\mr{op}}}\sqrt{\frac{r_\Sigma}{n}}\|\vecv\|_2.
	\end{align}
	Combining \eqref{ine:hub-stoc-upper}  with \eqref{ap:delta_upper}  and \eqref{ap:h_bellow}, with probability at least $1-\delta$, we have
	\begin{align}
		&\sum_{i=1}^n \lambda_o^2 \left\{-h\left(\frac{\xi_i + \vecx_i^\top \vecv}{\lambda_o\sqrt{n}}\right)+h \left(\frac{\xi_i}{\lambda_o\sqrt{n}}\right) \right\}\frac{\vecx_i^\top \vecv}{\lambda_o \sqrt{n}}\nonumber\\
		&\geq \frac{\lambda^2_{\min}(\Sigma^\frac{1}{2})}{3}\|\vecv\|_2^2-C\left(L\sqrt{\|\Sigma\|_{\mr{op}}} \sqrt{\frac{r_\Sigma}{n}}+\sqrt{\frac{\log(1/\delta)}{n}} \right)\|\vecv\|_2 -C\frac{\log(1/\delta)}{n}\nonumber\\
		&\stackrel{(a)}{\geq} \frac{\lambda^2_{\min}(\Sigma^\frac{1}{2})}{3}\|\vecv\|_2^2-CL\left(\sqrt{\|\Sigma\|_{\mr{op}}} \sqrt{\frac{r_\Sigma}{n}}+\sqrt{\frac{\log(1/\delta)}{n}} \right)\|\vecv\|_2 -CL\frac{\log(1/\delta)}{n},
	\end{align} 
	where (a) follows from $L\geq 1$.

\begin{lemma}
	\label{p:1e}
	Suppose that Assumption \ref{a:1} holds.
	Then, we have
	\begin{align}
		\label{ine:gc-normale}
		&\mbb{E}\left\|\frac{1}{n}\sum_{i=1}^n a_i \vecx_i \right\|_2 \leq c(L)\sqrt{\|\Sigma\|_{\mr{op}}} \sqrt{\frac{r_\Sigma}{n}} .
	\end{align}
\end{lemma}
	From the definition of $\{a_i\}_{i=1}^n$ and assumption on $\{\vecx_i\}_{i=1}^n$, we have
	\begin{align}
		\left\|\frac{1}{\sqrt{n}}\sum_{i=1}^n a_i \vecx_i^\top \vecv_1-\frac{1}{\sqrt{n}}\sum_{i=1}^na_i\vecx_i^\top \vecv_2\right\|_{\psi_2} = \left\|\frac{1}{\sqrt{n}}\sum_{i=1}^n a_i \vecz_i^\top \Sigma^\frac{1}{2}\vecv_1-\frac{1}{\sqrt{n}}\sum_{i=1}^na_i\vecz_i^\top \Sigma^\frac{1}{2} \vecv_2\right\|_{\psi_2}\leq L\|\Sigma^\frac{1}{2}(\vecv_1-\vecv_2)\|_2.
	\end{align}
	for any $\vecv_1,\vecv_2 \in \mbb{S}^{d-1}$.
	From corollary 8.6.3 of \cite{Ver2018High}, we have
	\begin{align}
		\mbb{E}\left\|\frac{1}{n}\sum_{i=1}^n a_i \vecx_i \right\|_2  =\mbb{E}\sup_{\vecv \in \mbb{S}^{d-1}}\frac{1}{n}\sum_{i=1}^n a_i \vecx_i^\top \vecv \leq CL\frac{1}{\sqrt{n}} \mbb{E} \sup_{\vecv \in \Sigma^\frac{1}{2}\mbb{S}^{d-1}}\vecg_d^\top \vecv,
	\end{align}
	where $\vecg_d$ is the $d$-dimensional standard normal Gaussian random vector.
 	From the proof of Theorem 9.2.4 of \cite{Ver2018High}, we have
	\begin{align}
		\mbb{E}\sup_{\vecv \in \Sigma^\frac{1}{2}\mbb{S}^{d-1}} \vecg_d^\top \vecv\leq \sqrt{\mr{Tr}(\Sigma)} 
	\end{align}
	and the proof is complete.

\bibliographystyle{plain}
\bibliography{ARWHR} 

\begin{thebibliography}{10}

\bibitem{BakPra2021Robust}
Ainesh Bakshi and Adarsh Prasad.
\newblock Robust linear regression: Optimal rates in polynomial time.
\newblock In {\em Proceedings of the 53rd Annual ACM SIGACT Symposium on Theory
  of Computing}, pages 102--115, 2021.

\bibitem{BouLugMas2013concentration}
St{\'e}phane Boucheron, G{\'a}bor Lugosi, and Pascal Massart.
\newblock {\em Concentration inequalities: A nonasymptotic theory of
  independence}.
\newblock Oxford university press, 2013.

\bibitem{CheZho2020Robust}
Xi~Chen and Wen-Xin Zhou.
\newblock Robust inference via multiplier bootstrap.
\newblock {\em Annals of Statistics}, 48(3):1665--1691, 2020.

\bibitem{CheAraTriJorFlaBari2020Optimal}
Yeshwanth Cherapanamjeri, Efe Aras, Nilesh Tripuraneni, Michael~I Jordan,
  Nicolas Flammarion, and Peter~L Bartlett.
\newblock Optimal robust linear regression in nearly linear time.
\newblock {\em arXiv preprint arXiv:2007.08137}, 2020.

\bibitem{DalTho2019Outlier}
Arnak Dalalyan and Philip Thompson.
\newblock Outlier-robust estimation of a sparse linear model using
  $\ell_1$-penalized huber's m-estimator.
\newblock In H.~Wallach, H.~Larochelle, A.~Beygelzimer, F.~d'Alch\'{e} Buc,
  E.~Fox, and R.~Garnett, editors, {\em Advances in Neural Information
  Processing Systems 32}, pages 13188--13198. Curran Associates, Inc., 2019.

\bibitem{DalMin2020All}
Arnak~S Dalalyan and Arshak Minasyan.
\newblock All-in-one robust estimator of the gaussian mean.
\newblock {\em arXiv preprint arXiv:2002.01432}, 2020.

\bibitem{DiaKonSte2019Efficient}
Ilias Diakonikolas, Weihao Kong, and Alistair Stewart.
\newblock Efficient algorithms and lower bounds for robust linear regression.
\newblock In {\em Proceedings of the Thirtieth Annual ACM-SIAM Symposium on
  Discrete Algorithms}, pages 2745--2754. SIAM, 2019.

\bibitem{FanLiuSunZha2018Lamm}
Jianqing Fan, Han Liu, Qiang Sun, and Tong Zhang.
\newblock I-lamm for sparse learning: Simultaneous control of algorithmic
  complexity and statistical error.
\newblock {\em Annals of statistics}, 46(2):814, 2018.

\bibitem{KLiKotMek2018Efficient}
Adam Klivans, Pravesh~K Kothari, and Raghu Meka.
\newblock Efficient algorithms for outlier-robust regression.
\newblock In {\em Conference On Learning Theory}, pages 1420--1430. PMLR, 2018.

\bibitem{KolLou2017Con}
Vladimir Koltchinskii and Karim Lounici.
\newblock Concentration inequalities and moment bounds for sample covariance
  operators.
\newblock {\em Bernoulli}, 23(1):110--133, 02 2017.

\bibitem{Mas2000Constants}
Pascal Massart.
\newblock About the constants in talagrand's concentration inequalities for
  empirical processes.
\newblock {\em The Annals of Probability}, 28(2):863--884, 2000.

\bibitem{Men2016Upper}
Shahar Mendelson.
\newblock Upper bounds on product and multiplier empirical processes.
\newblock {\em Stochastic Processes and their Applications},
  126(12):3652--3680, 2016.

\bibitem{MenZhi2020Robust}
Shahar Mendelson and Nikita Zhivotovskiy.
\newblock Robust covariance estimation under $ l\_ $\{$4$\}$-l\_ $\{$2$\}$ $
  norm equivalence.
\newblock {\em The Annals of Statistics}, 48(3):1648--1664, 2020.

\bibitem{NguTra2012Robust}
Nam~H Nguyen and Trac~D Tran.
\newblock Robust lasso with missing and grossly corrupted observations.
\newblock {\em IEEE transactions on information theory}, 59(4):2036--2058,
  2012.

\bibitem{PenJogLoh2020robust}
Ankit Pensia, Varun Jog, and Po-Ling Loh.
\newblock Robust regression with covariate filtering: Heavy tails and
  adversarial contamination.
\newblock {\em arXiv preprint arXiv:2009.12976}, 2020.

\bibitem{Tho2020Outlier}
Philip Thompson.
\newblock Outlier-robust sparse/low-rank least-squares regression and robust
  matrix completion.
\newblock {\em arXiv preprint arXiv:2012.06750}, 2020.

\bibitem{Ver2018High}
Roman Vershynin.
\newblock {\em High-dimensional probability: An introduction with applications
  in data science}, volume~47.
\newblock Cambridge university press, 2018.

\end{thebibliography}
\end{document}